\newcolumntype{x}[1]{!{\centering\arraybackslash\vrule width #1}}
\newtheorem{theorem}{Theorem}
\newtheorem{definition}{Definition}
\newtheorem{conjecture}{Conjecture}
\newtheorem{construction}{Construction}
\newtheorem{openproblem}{Open Problem}
\newcommand{\N}{{\mathbb N}}
\newcommand{\R}{{\mathbb R}}
\newcommand{\tr}{\text{tr}}
\NewDocumentCommand{\ceil}{s O{} m}{%
  \IfBooleanTF{#1} 
    {\lceil#3\rceil} 
    {#2\lceil#3#2\rceil} 
}
\let\@fnsymbol\@arabic
\newcommand{\specificthanks}[1]{\@fnsymbol{#1}}
\begin{document}

\title{\textbf{Some open mathematical problems on fullerenes}}
\insert\footins{\footnotesize \hspace{-0.5cm}
\begin{tabular}{l @{\hspace{1cm}} l @{\hspace{1cm}} l}
    Artur Bille   & Victor Buchstaber & Evgeny Spodarev  \\
    \texttt{artur.bille@uni-ulm.de} &
    \texttt{buchstab@mi-ras.ru} & 
    \texttt{evgeny.spodarev@uni-ulm.de} 
\end{tabular}\\
}

\author{
Artur Bille\thanks{Ulm University, Germany \quad\quad *Corresponding author}\textsuperscript{\hspace{0.15cm}*}\and 
Victor Buchstaber\thanks{Steklov Mathematical Institute RAN, Russia} \and 
Evgeny Spodarev\textsuperscript{\specificthanks{1}}
}
\date{}
\maketitle

\begin{abstract}
\label{abstract}
\noindent Fullerenes are hollow carbon molecules where each atom is connected to exactly three other atoms, arranged in pentagonal and hexagonal rings. 
Mathematically, they can be combinatorially modeled as planar, 3-regular graphs with facets composed only of pentagons and hexagons.
In this work, we outline a few of the many open questions about fullerenes, beginning with  the problem of generating fullerenes randomly. 
We then introduce an infinite family of fullerenes on which the generalized Stone-Wales operation is inapplicable. 
Furthermore, we present numerical insights on a graph invariant, called \textit{character} of a fullerene, derived from its adjacency and degree matrices.
This descriptor may lead to a new method for linear enumeration of all fullerenes. 
\end{abstract}
\textbf{Keywords:}  
Fullerene,
Dual graph,
Planar graph,
Generating fullerenes,
Stone--Wales operation,
Character, 
Spectrum,
Eigenvalue.

\noindent
\textbf{MSC2020:}  
\textbf{Primary:} 92E10;  
\textbf{Secondary:} 05C10, 00A27, 05C30, 05C85.



\section{Introduction}\label{sec:introduction}
Fullerenes, spherical molecular structures composed of pentagonal and hexagonal rings of carbon atoms, have long captivated researchers across various disciplines.
Shortly after their discovery, the enumeration and counting of fullerenes became a subject of high interest among chemists, physicists, computer scientists and mathematicians which has led to a substantial body of research.
Despite decades of study, several fundamental questions regarding fullerenes, particularly from a mathematical perspective, remain unresolved.  
This paper aims to address some of these open problems by presenting hypotheses based on numerical evidence, which can be replicated using the algorithms provided in \cite{bille_github_phd}.
The insights and algorithms discussed here may serve as the groundwork for further research in the mathematical analysis of fullerenes. 
A notable example of a proof assisted by computer computations of a conjecture, long unresolved until recent computational advances, is the \textit{Barnette-Goodey conjecture}.
Originally stated in the 1960s, this conjecture asserts that all fullerene graphs are Hamiltonian.
In other words, every fullerene graph contains a closed path that passes through every vertex of the graph once without repeating any vertices, except for the starting and ending vertex.
In 2017, Kardo\v{s}, who had previously proven the sharpest lower bound for the number of vertices in the longest path on a fullerene graph \cite{erman09}, provided a valid computer-assisted proof of the Barnette-Goodey conjecture in \cite{computer_proof}.

A year before the Nobel Prize in Chemistry was awarded to Curl, Kroto and Smalley for their pioneering work on fullerenes, Fowler and Manolopoulos  \cite{FowlerManop} offered a comprehensive overview of the theory and mathematics behind fullerenes. 
This influential work outlined open fundamental mathematical questions related to fullerenes as well as their physical and chemical properties. 
Additionally, they introduced the important \textit{face spiral conjecture}, which offered the first general framework for generating and enumerating (mathematical) fullerenes. 

Graph theory has proven to be an indispensable tool for a detailed analysis of fullerene structures. 
In 2016, Andova et al. \cite{AndKarSkr16} provided an overview of graph invariants and their potential correlation with fullerene molecule stability. 
A subsequent study by Sure et al. \cite{Grimme17}, published just one year later, expanded this analysis from a chemical and physical standpoint, employing high-accuracy quantum chemistry methods to compute the relative energies of all $C_{60}$ isomers.
The study drew connections between these results and various topological indices and geometrical measures, ultimately proposing a list of \textit{good stability criteria}.
In \cite{Bille20}, the \textit{Newton polynomials} of (subgraphs of) dual fullerene graphs were introduced as another good stability criterion, based on the spectra of the adjacency matrix $A$ of these graph representations. 
In the present paper, we extend these results to incorporate the degree matrix $D$ and their linear combinations $\alpha A+\beta D$, and introduce a novel graph invariant, termed the \textit{character} of fullerenes, which has the potential to address some of the open problems outlined here and in \cite{FowlerManop}.

This paper is structured as follows.
First, we revisit and extend the concept of the face spiral in the context of methods for constructing fullerenes at random, drawing on the approaches discussed in \cite{Plestenjak96}, and review additional algorithms for constructing fullerene isomers, including the \textit{Stone--Wales operation} and two of its generalizations, one of which will be examined in more detail. 

Subsequently, we present novel findings on the eigenvalues of linear combinations of adjacency and degree matrices representing (sub)graphs of fullerenes.
Using these eigenvalues, we introduce a graph invariant, which allows for a linear ordering of fullerenes within $C_n$ and across all feasible values of $n$. 
This ordering provides an alternative approach enumerating fullerenes, compared to the commonly accepted method based on the face-spiral hypothesis.

\section{Preliminaries}\label{section:preliminaries}
We begin by recalling important definitions and notation used throughout this article.

A \textit{fullerene} is a convex, 3-regular polytope consisting only of pentagonal and hexagonal facets.
Let $n$ denote the number of vertices in a fullerene.
By Euler's polyhedron formula and Eberhard's theorem \cite{Grunbaum}, every fullerene contains exactly 12 pentagonal facets, while the number of hexagons $\sfrac{n}{2}-10$ depends solely on $n$.
Consequently, $n$ must be an even integer for at least one fullerene with $n$ vertices to exist. 
Notably, no fullerene with $n=22$ vertices exists, as shown in \cite{Grünbaum_Motzkin_1963}.
Thus, we define a positive even integer $n\geq 20$, $n\not=22$, as \textit{feasible}. 

Let $C_n$ denote the set of all fullerenes with exactly $n$ vertices. 
This set can be partitioned into equivalence classes under graph isomorphisms, with each class referred to as a \textit{$C_n$-isomer}.
Additionally, let $\overline{C_{n}}$ represent the union of all $C_{\Tilde{n}}$ with $\Tilde{n}\leq n$.

By Steinitz's theorem \cite{Grunbaum}, fullerenes can be associated with planar graphs that preserve their combinatorial structure and properties. 
A graph $G$ is defined as a tuple $G=(V(G),E(G))$, where $V(G)$ is the set of vertices, and $E(G)$ the set of edges.
Each edge is an unordered pair of vertices $(v,w)$ with $v,w\in V(G)$.
A \textit{facet} of the graph $G$ is a connected region of the plane enclosed by a \textit{closed path} of vertices $(v_1,\ldots,v_l)$, $l\geq 2$, where $\left(v_1,v_l\right)$, $\left(v_j,v_{j+1}\right)\in E(G)$ for $j=1,\ldots,l-1$.
We define one facet as being \textit{larger} than another if it is enclosed by more vertices.
Additionally, a vertex $v\in V(G)$ is called a \textit{boundary vertex} of a facet $f$ if $v$ is part of the closed path enclosing $f$.

A common graph representation of polytopes, like fullerenes, is given by the \textit{Schlegel diagram}, being a projection of a 3-dimensional polyhedron onto the plane.
Although this projection is not bijective (meaning that the resulting embedding of the graph is not unique, where uniqueness or equality ($\simeq$) of graphs is meant up to an isomorphism), the combinatorial properties of the graph are invariant.
Henceforth, we identify a \textit{fullerene} with its corresponding planar graph representation.

Define $\text{iso}(n):=\left| C_n\right|$ and $\overline{\text{iso}(n)}:= \left| \overline{C_n} \right|$ to be the number of fullerenes in $C_n$ and $\overline{C_n}$, respectively.
The exact values of $\text{iso}(n)$ are known for $n\leq 400$ by generating all possible isomers, cf. \cite{HouseofGraphs}, though an explicit formula for $\text{iso}(n)$ remains unknown.
However, an asymptotic behaviour was derived by Rukhovich \cite{Rukh18}, building on general insights from Thurston \cite{Thurs98} and Engel et al. \cite{engel2018}:

    \begin{align*}
        \liminf_{n\xrightarrow{}\infty} \frac{\text{iso}(n)}{n^9} = \frac{809}{2^{15}\cdot 3^{13}\cdot 5^2},\qquad
        \limsup_{n\xrightarrow{}\infty} \frac{\text{iso}(n)}{n^9} = \frac{809}{2^{15}\cdot 3^{13}\cdot 5^2} \zeta(9),
    \end{align*}
where $\zeta$ is the \textit{Riemann zeta function}, with $\zeta(9)\approx 1.00200839$.

Notably, Engel et al. \cite{engel2024} derived an explicit formula for counting the so-called \textit{oriented fullerenes} with $n$ vertices, which are typically far more numerous than their unoriented counterparts.
For example, they report 3532 oriented fullerenes with $60$ vertices, while it is commonly known that $\text{iso}(60)=1812$. 
Ordinary fullerenes differ from oriented fullerenes in that the set of edges of the latter is defined in terms of ordered pairs of vertices.

Since fullerenes are 3-connected, Whitney's theorem cf.\cite{Grunbaum} ensures that their dual graph is unique. 
A \textit{dual fullerene}, denoted by $T_n$, can also be viewed as an element of the set of all convex triangulations of the sphere, as described in \cite{engel2024}.
In this article, we restrict our focus to dual fullerenes only, noting that all results can easily be translated in terms of the original fullerene structures.
In presented figures, vertices are color-coded: green for vertices of degree five, red for vertices of degree six, and white for vertices of arbitrary degree.

The set of vertices $V(T_n)$ can be partitioned into two sets: one containing vertices of degree five and the other containing vertices of  degree six. 
The subgraph of $T_n$ induced by the vertices of degree five captures the connectivity between the pentagons, while the subgraph induced by vertices of degree six represents the connectivity between hexagons. 
We denote these graphs by $T_n^5$ and $T_n^6$, respectively.

Fullerenes with no edge connecting any pair of vertices in $T_n^5$ are said to satisfy the \textit{Isolated Pentagon Rule} (IPR).
These \textit{IPR-isomers} are believed to be chemically more stable than non IPR-isomers, making them typically more deserving the detailed analysis.

The introduced graphs can be uniquely represented by their adjacency matrix $A$. 
Additionaly, the degree matrix $D$ (a diagonal matrix with degrees of all vertices on its diagonal), is often considered in graph theory, along with their linear combinations $\alpha A + \beta D$, $\alpha,\beta \in \R$.
For example, choosing $\alpha = -1$ and $\beta = 1$ results in the well-known \textit{Laplacian matrix} of a graph.


\section{Random fullerenes}\label{section:random_fullerenes}
The need to randomly select a fullerene uniformly distributed on $C_n$ arises in many contexts,  particularly in mathematical research.
For example, a typical objective is to study the limiting behavior of the topological properties of sequences of fullerenes as the number of vertices $n$ grows.
The specific sequence of fullerenes chosen influences the limit, making the selection of its elements critical.
To apply statistical methods, assumptions about the distribution of fullerene selection and the dependencies between chosen elements are typically required.  
This naturally leads to the following problem:
\begin{align*}
    \text{How can one construct a uniform distribution on the elements of $C_n$?}
\end{align*}
\subsection{Naive approach}
A straightforward approach based on a given generating algorithm is as follows:
first, utilizing the given algorithm generate all fullerene isomers with $n$ vertices, and enumerate them from 1 to iso$(n)$.
Then, choose a uniformly random integer between 1 and iso$(n)$ to represent a uniformly chosen fullerene isomer. 

While theoretically sound, this method is practically limited by the computational difficulty of generating all fullerenes, in particular when $n > 400$. 
The high computational cost arises from the rapid growth of iso$(n)$, coupled with the computational costs of known generating algorithms.

\subsection{Generating algorithms for fullerenes}\label{section:generating_algorithms_for_fullerenes}
Many generating algorithms for fullerenes are based on \textit{growth operations} (or \textit{expansions}), and their inverse, \textit{reductions}. 
Growth algorithms typically begin with a small \textit{seed fullerene}, such as the dodecahedron $C_{20}$ or the $C_{24}$-isomer, the smallest fullerene with a non-zero number of hexagons.
These algorithms incrementally add hexagons, thereby increasing the number of vertices.
Notable examples of growth operations include those presented in \cite{Hash08}, which form the basis of the \textit{buckygen} software \cite{buckygen}; the operations defined by Buchstaber and Erokhovets \cite{BuchEro17}, which generalize the \textit{Endo-Kroto operations} from \cite{EK92} and are further discussed in a broader framework in \cite{Buchstaber_2017,Erokh18}; the \textit{leapfrog operations} described by Andova et al. \cite{AndKarSkr16}; and Coxeter's operations outlined in \cite{FT9928803117}. 
Importantly, Brinkman et al. \cite{Br09} demonstrated that no finite set of \textit{closed} growth operations can generate all possible fullerene isomers. 
Here, an operation is called \textit{closed} if its application to a fullerene always results in another fullerene.

It is worth noting that the growth operations proposed by Buchstaber et al. \cite{BuchEro17} form a finite family sufficient to construct all fullerenes starting from the dodecahedron $C_{20}$.
However, these operations are not closed, as they may produce intermediate graphs with one exceptional face, a quadrangle or a heptagon, before ultimately yielding a fullerene. 
If an algorithm can generate all possible fullerenes, we refer to it as \textit{complete}.

While growth algorithms are efficient for generating fullerenes with a small number of vertices ($n\leq 400$), their performance declines significantly as $n$ increases. 
This inefficiency is exacerbated by the fact that growth operations are not injective, meaning multiple operation sequences can lead to the same fullerene, resulting in a high rejection rate and increased computational costs.

Alternatively, \textit{isomerization algorithms} preserve the number of vertices while rearranging the existing pentagons and hexagons within the fullerene. 
The most well-known isomerization algorithm is based on the Stone--Wales operations, initially introduced in \cite{SW}.
Another example is the set of rotation and mirror operations on the fullerene graph, as introduced by Astakhova and Vinogradov \cite{ASTAKHOVA1998259}.

While more computationally efficient than growth algorithms, isomerization algorithms rely on the availability of a seed fullerene in $C_n$ beforehand. 
One approach to generate such seed fullerenes is through growth algorithms. 
Another way involves dividing the set of all feasible $n$ into congruence classes and, for each of these classes, defining a scalable fullerene structure -- such as a $(p,q)$--nanotube with a chiral vector depending on the congruence class -- that can be expanded to every element of the congruence class based on its vertex count. 
For example, for all $n=20+10r$ with $r\in\N_0$, a $(5,0)$--nanotube can be constructed similarly to the $(5,5)$--nanotubes described in \cite{bille24}.
In the case of $(5,0)$--nanotubes, the cap is composed exclusively of six vertices of degree 5, corresponding to six pentagons in the direct fullerene graph. 
Consequently, the hexagonal belt contains five vertices, compared to ten vertices in the $(5,5)$--nanotubes.
Another method for generating seed fullerenes for every $n\geq 34$ is discussed later in this section.

The \textit{face spiral algorithm}, one of the earliest methods for generating fullerene structures, is  neither a growth algorithm nor an isomerization algorithm.
Originally proposed by Manolopoulos et al. \cite{ManMayDown91}, a generalized approach - proven to be complete - was later presented in \cite{WSA17} and implemented in \cite{PeSchwertFull}.

\subsection{Face spiral acceptance-rejection algorithm}
While implementation \cite{PeSchwertFull} represents the most efficient version of the spiral algorithm, it is significantly outperformed by buckygen, currently the most efficient algorithm for fullerene construction. Nevertheless, the spiral algorithm is valuable for its potential to generate a uniform distribution within $C_n$.

The main idea behind the algorithm is to peel a fullerene like an orange, starting from one face and moving through each successive face in a tight spiral manner. 
As this spiral progresses, one records whether each face is a pentagon or a hexagon, producing a sequence of 5's and 6's. 
By repeating this process in each of the possible $6n$ ways (since a $C_n$-isomer has $3n$ facets as potential starting poits of the spiral, and two possible directions for each facet), a list of many sequences is generated. 
The lexicografically smallest spiral is then selected. 
The positions of the twelve 5's in this spiral provide the locations of the pentagonal faces yielding a final sequence of length 12 representing the fullerene.

Listing fullerenes in $C_n$ in lexicographical order, based on the positions of their pentagons within the spiral, provides a compact and interpretable representation and enumeration of fullerenes. 
To randomly select a fullerene from $C_n$ (each isomer having the same probability of being chosen), one can choose a random integer $N$ between 1 and $\binom{\sfrac{n}{2} + 2}{12}$ (the total number of all possible positions of pentagons in ascending order),  and then retrieve the $N^{th}$ sequence from the set of all position vectors. 

While generating these sequences can be efficiently realized, their number of length $n$ with twelve 5's as coordinates compared to the relatively small number $\mathcal{O}(n^9)$ of fullerene isomers results in a high rejection rate, making this approach impractical. 
For instance, for $n=60$, there are $\binom{32}{12} = 1,399,358,844,975$ possible pentagon position sequences, but only 1812 isomers in $C_{60}$. 
This results in an acceptance rate of approximately $10^{-9}$. 
As the binomial coefficient $\binom{\sfrac{n}{2}+2}{12}$ grows asymptotically as $n^{12}$ due to \textit{Stirling's formula} \cite{abra}, and therefore, faster than iso$(n)=\mathcal{O}(n^9)$ as $n\to\infty$, the acceptance rate diminishes even further for larger $n$.

The first known fullerene to violate this spiral rule is a $C_{380}$--isomer, with a second counterexample found among more than 90 billion $C_{384}$-isomers. 
However, all other isomers of $C_n$ with $n\leq 450$ follow this rule, cf. \cite{ManFow}. 

Although the authors of \cite{WSA17} proposed clever methods to prefilter sequences which do not correspond to fullerenes, the acceptance rate remains too low to make this approach feasible for large $n$.

Despite these challenges, the spiral method is still widely used for enumerating fullerenes and other polyhedral structures, as it can be extended to all cubic polyhedral graphs, cf. \cite{WSA17}.
Therefore, when referring to $C_{n,j}$, we mean the $j^{th}$ isomer in the lexicographical order of all $C_n$-isomers, based on their smallest pentagonal position sequence. 

The introductory question of this section remains valid and can be reformulated as a problem statement in the following way:
\begin{openproblem}
    Develop a computationally efficient algorithm to generate a
    uniform distribution on fullerene isomers $C_n$ for all feasible $n$.
\end{openproblem}
The Stone--Wales operation, an isomerization operation discussed in the following section, has the potential to lead to a solution to this open problem.

\section{Stones--Wales Operation}\label{section:stone_wales}
In 1986, A. J. Stone and his student D. J. Wales \cite{SW} introduced a flip operation on four faces of fullerene graphs, later known as the \textit{Stone--Wales (SW) operation}.
This operation gained popularity not only among chemists but also among mathematicians and physicists.
In their work, Stone and Wales calculated the so-called \textit{$\pi$-electron-energy} for various $C_{60}$-isomers, generated by applying the SW operation to the Buckminster structure $C_{60,1812}$. 
SW operations are often referred to as \textit{Stone--Wales defects} in chemistry, and they are not limited to fullerenes, appearing in other carbon-based molecules as well \cite{BrayCastro15}.

Currently, no closed and complete isomerization algorithm is known, although several generalizations of the SW operation have attempted to combine both properties. 
Two such generalizations -- the \textit{polyhedral Stone-Wales} (pSW) and the \textit{generalized Stone-Wales} (gSW) operation -- are introduced in this section.
In both cases, the classical SW operation is presented as a special instance of these generalizations.

For the gSW operation, we prove that infinitely many fullerene isomers cannot be generated by any sequence of gSW operations, offering insights into the algorithm’s incompleteness and potential strategies for overcoming it. 

\color{black}
\subsection{Polyhedral Stone--Wales Operation}\label{section:polyhedral Stone-Wales Operation}
The pSW operation was introduced by Plestenjak et al. \cite{Plestenjak96} as part of an effort to generate fullerenes at random.
This operation modifies triangulations by selecting two adjacent vertices, $v_1$ and $v_2$, and the edge $(v_1,v_2)$ connecting them in $T_n$. 
Since $T_n$ is a triangulation, there are exactly two other vertices, $v_3$ and $v_4$, that are adjacent to both $v_1$ and $v_2$. 
The pSW operation removes the edge $(v_1,v_2)$ and adds a new edge connecting $v_3$ and $v_4$ akin to an edge flip. 
Figure \ref{fig:pSW} illustrates this process.
While the pSW operation maintains the number of vertices, it does not preserve the degrees of the vertices, meaning the resulting graph may no longer be a fullerene. 
Therefore, the pSW is not closed with respect to the set of fullerenes but rather operates on the whole set of triangulations with $\sfrac{n}{2}+2$ vertices. 

\begin{figure}
    \centering
    \includegraphics[width=0.5\linewidth]{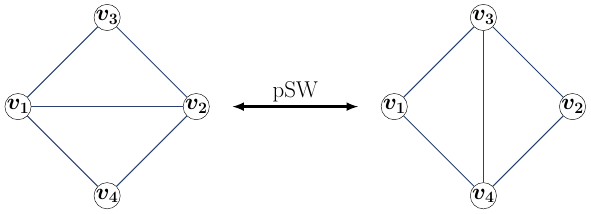}
    \caption{pSW operation on four vertices with arbitrary degrees.}
    \label{fig:pSW}
\end{figure}

Nevertheless, based on Wagner's result from \cite{Wagner1936}, this set of operations can eventually generate all $C_n$ isomers. 
Thus, the pSW operation is complete. 

The usual SW operation is a special case of the pSW, in which $v_1$ and $v_2$ are required to have degree six, while $v_3$ and $v_4$ must have degree five. 
In this case, the two hexagonal vertices become pentagonal, and the two pentagonal vertices become hexagonal after applying the pSW operation.

As noted by Plestenjak et al. in the conclusion of \cite{Plestenjak96}, it remains unclear how the choice of the seed fullerene affects the distribution of the generated fullerene isomers at the end of the algorithm.
In their article, they used the $\sfrac{n}{2}$-gonal prism, which can be constructed for any feasible $n$ and has the same number of vertices, edges and faces as an $C_n$-isomer.
The choice of vertices for applying the pSW operation is partly random and based on a specific energy function for polyhedra.
Broadly speaking, this selection rule aims to minimize the energy of the polyhedra under consideration. 
If multiple choices of edges result in the same energy level, an edge is chosen uniformly at random. 

\subsection{Generalized Stone--Wales Operation}\label{section:generalized Stone-Wales Operation}
In 1995, Babi\'{c} et al. \cite{generalized_SW} introduced another generalization of the Stone--Wales operation, simply termed \textit{generalized Stone--Wales} (gSW) operation, which applies to arbitrary large fragments of fullerenes with a specific structure. 

Let the length of a path on a graph be defined as $|V|-1$, where $V$ denotes the set of vertices in the path.
\begin{definition}\label{def:gSW}
For a feasible $n$ and $w\in\N$, $w\geq 2$, let a $C_n$-isomer with dual graph $T_n$ be given.
We call 
\begin{enumerate}[(a)]
\item a path $(v_1,\ldots,v_{2w})$ with $v_i\in V(T_n)$ a \textit{gSW path} of length $2w-1$, if the following in $T_n$ holds:
\begin{itemize}
\item $v_1$ and $v_{2w}$ have degree 5,
\item $v_2$ and $v_{2w-1}$ have degree 6, 
\item $v_i\not= v_j$ for $i\not= j$,
\item $(v_{i}, v_{i+2})\in E\left(T_n\right)$ for all $i=1,\ldots, 2w-2$.
\end{itemize}
\item a subgraph of $T_n$ induced by a gSW path, a \textit{gSW fragment}.
\item a function $F:C_n\rightarrow C_n$, which modifies the edges of $T_n$, such that $(v_2,v_1,v_4,v_3,\ldots, v_{2w},v_{2w-1})$ forms a gSW path in $F(T_n)$ if and only if $(v_1,v_2,\ldots, v_{2w})$ is a gSW path in $T_n$, a \textit{gSW operation}.
\end{enumerate}
\end{definition}

From this definition, it follows that the existence of a gSW path in a fullerene is necessary to apply the gSW operation to this fullerene.  
An example of a gSW path and gSW fragment with $w=5$ is shown in Figure \ref{fig:gSW operation}.
It is also worth noting that the gSW operation is self-inverse. 
The original SW operation is a special case of the gSW operation with $w=2$, meaning that no intermediate vertices exist between the first and last pair of vertices.
Since the pentagons and hexagons at the beginning and end of the gSW path in a fullerene switch places, and the degrees of all intermediate vertices remain the same, the gSW operation guarantees that the output is again a fullerene. 
Hence, the operation is closed with respect to the set $C_n$.

On one hand, the gSW operation extends the pSW operation, as both effectively involve edge flipping, though the pSW operation is restricted to flipping a single edge at a time. 
On the other hand, the gSW operation imposes specific degree constraints on the first and last two vertices in the gSW path.
In future studies, these constraints could be dropped to create a hybrid approach that combines both operations. 
However, an algorithm based on this combination would not be closed within the set $C_n$.
Nevertheless, this approach could lead to a significant reduction of computational costs.

\begin{figure}[H]
    \centering
    \includegraphics[scale=1]{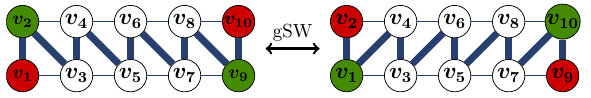}
    \caption{A gSW operation on a gSW path with ten vertices ($w=5$) with thick edges representing the gSW path necessary for the gSW operation.}
    \label{fig:gSW operation}
\end{figure}

The authors of \cite{generalized_SW} identified only one isomer with fewer than 70 vertices that lacks a gSW path. 
Based on this observation, they conjectured that gSW operations might be powerful enough to generate (almost) all fullerene isomers.  
However, due to this first counterexample, an algorithm solely based on the gSW operation clearly cannot be complete. 
The number and structure of fullerenes that cannot be generated by a gSW operation have yet to be fully classified, and a proper classification might provide insights into how to overcome this incompleteness.

We demonstrate that infinitely many isomers exist for which gSW operations cannot be applied. 
By computer experiments, we identified five counterexamples with fewer than 100 vertices, and two with fewer than 70 vertices. 
The specific counterexamples with $n\leq 100$ are: $C_{20}$, $C_{56,622}$, $C_{80,31924}$, $C_{92,39303}$ and $C_{96,191839}$. 
Since the authors of \cite{generalized_SW} briefly mention their counterexample without specifying the number of vertices, it is unclear which specific isomer they refer to. 
However, they likely exclude the dodecahedron $C_{20}$, as it is a trivial counterexample due to the abscence of pentagonal faces. 

In the following, we construct an infinite family of fullerenes that lack gSW paths, focussing only on paths on the subgraph $T_n^6$.
If we restrict the vertices $v_3,\ldots,v_{2w-2}$ to be elements in $V\left(T_n^6\right)$, a gSW path can be described as a zigzag path of odd length in $T_n^6$, starting and ending at boundary vertices, i.e., vertices with degree less than six.
Hence, we aim to define a structure for the connected components of $T_n^6$ that permits only zigzag paths of even length.

By studying the aforementioned counterexamples, we deduced a specific structure of the subgraph $T_n^6$ that does not permit a gSW path and remains consistent under scaling, allowing for the construction of an infinite set of counterexamples.
For $n\leq 100$, based on our numerical investigations, we claim that the five aforementioned counterexamples are the only isomers that do not allow for a gSW operation. 
For $n>100$, however, additional counterexamples with different structures may exist, posing an open problem for future research:
\begin{openproblem}
    Identify all fullerenes that do not permit a gSW operation.
\end{openproblem}

\noindent To provide a partly solution to this question, we need the following terms.
\begin{definition}\label{def:triangle} 
For a feasible $n$, let $T_n^6$ be the hexagonal dual graph of a fullerene in $C_n$, and let $t,r_1,r_2,r_3\in \N_0$, $r_1\geq r_2\geq r_3$,  $r_i+r_j\leq t-1$ for $i\not=j$.
We call a connected component of $T_n^6$
\begin{enumerate}[(a)]
\item a $t$\textit{-triangle}, for $t=0$, if it is a single vertex, and for $t>0$, if
\begin{itemize}
\item it has $t^2$ triangles as inner faces, and
\item it has $\frac{1}{2}(t+1)(t+2)$ vertices, three of which are of degree two, $3(t-1)$ vertices of degree four, and the remaining vertices of degree six. 
The vertices of degree two are called \textit{corners} of the $t$-triangle.
\end{itemize}

\item a $(t,(r_1,r_2,r_3))$\textit{-triangle}, if it is a $t$-triangle in which the vertices of $r_1$-, $r_2$-, and $r_3$-triangles at the corners have been deleted. 
Edges between vertices whose degree decreased due to the deletion are called \textit{open edges}.
\end{enumerate}
\end{definition}

\begin{figure}[H]
\centering
    \includegraphics[scale=1.2]{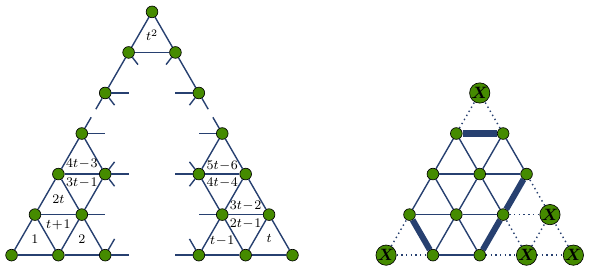}
\caption{Left: General structure of a $t$-triangle. 
Right: A $(4,(1,0,0))$-triangle with deleted edges (dotted lines) and deleted vertices (labeled $X$). 
Thick lines represent open edges.}
\label{fig:t_triangles}
\end{figure}

The general structure of $t$-triangles and an explicit example of a truncated $4$-triangle are illustrated in Figure \ref{fig:t_triangles}.

As it turns out the hexagonal subgraphs $T_n^6$ of every counterexample can be partitioned into a set of $t$-triangles and $(t,(r_1,r_2,r_3))$-triangles.
For the construction of this partition, we first need to introduce the following terms.

\begin{definition}
    A vertex $v\in V(T_n^6)$ is called a \textit{2-facet vertex} or \textit{3-facet vertex}, if $v$ is a boundary vertex of two or three facets of $T_n^6$ larger than a triangle, respectively. 
\end{definition}
From the definition, it follows immediately that a $2$-facet vertex has at least degree two and at most degree four, and that a $3$-facet vertex must have degree three. 
Further, since $T_n$ consists of triangles only, a 2-facet vertex of degree four is a vertex of exactly two triangles as illustrated in Figure \ref{fig:23_facet vertices}.

\begin{figure}[H]
    \centering
    \includegraphics[width=0.75\linewidth]{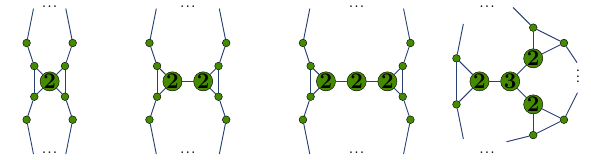}
    \caption{Cases of structures of $T_n^6$ with $2$-facet and $3$-facet vertices (labeled $2$ and $3$, respectively).}
    \label{fig:23_facet vertices}
\end{figure}

The main idea of the following construction is to ``cut'' the graph $T_n^6$ along its $2$-facet and $3$-facet vertices into $t$-triangles and $(t,(r_1,r_2,r_3))$-triangles, such that there are no $2$-facet and $3$-facet vertices and no vertices with degree 5 anymore.  


\begin{construction}\label{con:cutedge-partition} 
Let $T_n^6$ be the hexagonal subgraph of a dual fullerene $T_n$. 
Transform $T_n^6$ by applying the following two operations:
\begin{enumerate}[1.] 
\item For every $2$-facet vertex $v\in V(T_n^6)$, apply the following \textit{cut} operation:
\begin{enumerate}[(i)]
    \item Choose a minimal set of edges incident to $v$ such that deleting these edges makes $v$ a boundary vertex of only one facet larger than a triangle.
    \item After deleting these edges, insert a new vertex $\tilde{v}$ into the graph. 
    \item Reinsert the removed edges so that $\tilde{v}$ becomes the endpoint of these edges, replacing $v$. 
\end{enumerate}  
Denote the resulting graph by $\widetilde{T_n^6}$.

\item Let $v_1,\ldots,v_l \in V\left(\widetilde{T_n^6}\right)$ be all vertices of degree $5$ in $\widetilde{T_n^6}$.
While vertices of degree $5$ exists, perform the following procedure:
    \begin{enumerate}[(i)]
    \item Find a pair of vertices $(v_i,v_j)$, $i,j=1,\ldots,l$, $i\not=j$, on a connected component $CC$ of $\widetilde{T_n^6}$, such that the shortest path $p=(v_i,\ldots,v_j)$ between them is minimal among the shortest paths between any other pairs of vertices of degree $5$, and such that the edges of $p$ surround solely triangular facets. 
    The path $p$ divides $CC$ into two parts, denoted by $CC_1$ and $CC_2$. 
    \item Delete all edges with one endpoint in, say, $CC_2$, and the other endpoint being a vertex on the path $p$.
    \item For every vertex $v$ in $p$, add a new vertex $\widetilde{v}$ to $\widetilde{T_n^6}$.
    \item Add edges between these new vertices, replicating  the edges of the original path $p$.
    \item Reintroduce all edges deleted in step (ii), replacing the original endpoint $v$ with its corresponding copy $\widetilde{v}$.
    \end{enumerate}
\end{enumerate}
\end{construction}
\noindent Figure \ref{fig:cut_partition} illustrates both parts of Construction \ref{con:cutedge-partition} applied to the hexagonal subgraphs $T_{56,622}$, $T_{80,31924}$ and $T_{96,191839}^6$.

\begin{figure}[H]
    \centering
    \includegraphics[scale=1.2]{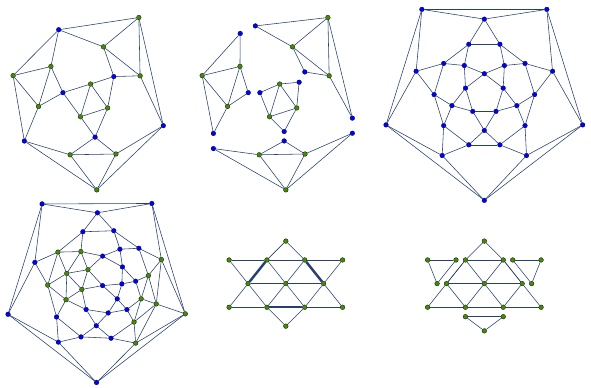}
    \caption{$T_{56,622}^6$ (upper left), $T_{80,31924}^6$ (upper right) and $T_{96,191839}^6$ (lower left) with $2$-facet vertices colored blue.
    The cut-partition of $T_{56,622}$ yields four $2$-triangles, and $T_{80,31924}$ yields 20 $1$-triangles.
    Applying the first part of Construction \ref{con:cutedge-partition} to $T_{96,191839}^6$ produces 12 $1$-triangles and two copies of the structure in the lower center. 
    Second part of the construction decomposes each of these copies into a $3$-triangle and three $1$-triangles. 
    Bold edges represent the path $\boldsymbol{p}$.}
    \label{fig:cut_partition}
\end{figure}

\begin{definition}
For a given $T_n^6$ we call the set of components of the graph generated following Construction \ref{con:cutedge-partition} a \textit{cut-partition} of $T_n^6$.
\end{definition}

\noindent
Based on these definitions and our numerical results, we now present the following conjectures.

\begin{conjecture}\label{conjecture:t-triangles_only}
    $C_{56,622}$ and $C_{80,31924}$ are the only fullerenes whose cut-partition consists of $t$-triangles only.
\end{conjecture}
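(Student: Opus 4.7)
The plan is Diophantine bookkeeping followed by a planar-closure (rigidity) check. First, I would translate the hypothesis into counting identities. If the cut-partition of $T_n^6$ has $m_t$ copies of the $t$-triangle, then using the vertex and edge counts recorded in Definition~\ref{def:triangle}, the total vertex count of the cut-partition is $\sum_{t\ge 0} m_t\,\tfrac{(t+1)(t+2)}{2}$, and it exceeds $\tfrac{n}{2}-10$ by exactly the number of vertices created by Construction~\ref{con:cutedge-partition}: one per $2$-facet vertex of $T_n^6$, two per $3$-facet vertex, and one per path-vertex of step~$2$. Writing $N$ for this total, the identity reads
\begin{equation*}
\sum_{t\ge 0} m_t\,\tfrac{(t+1)(t+2)}{2} \;=\; \tfrac{n}{2}-10 + N.
\end{equation*}
A parallel edge count gives a second identity. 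A third identity comes from the pentagons: the sum $\sum_v (6-\deg_{T_n^6}(v))$ over hexagonal vertices equals the number $e_{56}$ of pentagon-hexagon edges in $T_n$, and together with $2e_{55}+e_{56}=60$ this forces $e_{56}\le 60$. Since each corner of a $t$-triangle contributes $4$ to this sum and each degree-$4$ boundary vertex contributes $2$ (modulo a controllable correction from the cut-created boundary vertices), the identity bounds $\sum_t m_t(t+1)$ linearly in $N$ and $e_{55}$.

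Second, I would execute a planar-closure (rigidity) argument. In the geometric dual picture, a cut-partition consisting solely of untruncated $t$-triangles realises $S^2$ as a flat equilateral triangulation with exactly twelve cone points (the pentagons), each of angular defect $\pi/3$, meeting along the boundaries of the triangular patches. This is precisely Thurston's setting for equilateral triangulations of $S^2$~\cite{Thurs98}, later made completely explicit by Engel et al.~\cite{engel2018,engel2024}, whose classification parametrises such structures by the positions of the twelve pentagons on the triangular lattice up to the action of a discrete group. The untruncated requirement (no $(t,(r_1,r_2,r_3))$-triangle with any $r_i\ge 1$) severely restricts the admissible pentagon configurations, and I expect this restriction, combined with the counting identities of step~$1$, to single out exactly the two cases $m_2=4$ and $m_1=20$ (all other $m_t=0$), realised respectively by $C_{56,622}$ (tetrahedrally symmetric) and $C_{80,31924}$ (icosahedrally symmetric $(2,0)$-Goldberg--Coxeter).

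The hardest step will be the rigidity part: one must certify that each of the remaining lattice/Eisenstein-integer candidates surviving step~$1$ either (i) fails the twelve-pentagon total-defect constraint, (ii) produces a graph that is not $3$-regular or not $3$-connected and hence is not a fullerene, or (iii) necessarily contains a $(t,(r_1,r_2,r_3))$-triangle with some $r_i\ge 1$. A practical route is an explicit case analysis: the counting identities of step~$1$ bound the admissible tuples $(m_t)_t$ uniformly in $e_{55}$ and $N$, so the remaining candidate list should be short enough to be exhausted by direct inspection, for instance with the algorithms of~\cite{bille_github_phd}. Handling the correction terms coming from cut-created boundary vertices cleanly - so that the third identity of step~$1$ is genuinely bounded and not merely conditional - is where I expect the technical bulk of the proof to live.
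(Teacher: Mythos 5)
There is nothing in the paper to compare your proposal against: the statement you are proving is Conjecture~\ref{conjecture:t-triangles_only}, which the authors explicitly do \emph{not} prove. It is supported only by a computer search over isomers with at most $100$ vertices, and ``Prove or falsify Conjectures~\ref{conjecture:t-triangles_only} and~\ref{conjecture:cut partitions no gSW}'' is listed verbatim as one of the paper's open problems. So your text should be judged as a proposed attack on an open question, not checked against an existing argument.

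As such an attack it is a sensible program, but it has genuine gaps that you should name honestly rather than defer. First, your counting identities do not by themselves produce a finite candidate list: the identity $\sum_t m_t\,\tfrac{(t+1)(t+2)}{2}=\tfrac{n}{2}-10+N$ has $n$ (and $N$) unbounded, so a priori there could be fullerenes of arbitrarily large $n$ whose hexagonal part decomposes into a few very large untruncated $t$-triangles; the pentagon-edge identity $2e_{55}+e_{56}=60$ bounds $\sum_t m_t(t+1)$ only after you control the ``correction from cut-created boundary vertices,'' and that correction is exactly where the combinatorics of Construction~\ref{con:cutedge-partition} (which vertices get duplicated, and how many times) enters; you have not shown it is bounded. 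Until that is done, the claim that ``the remaining candidate list should be short enough to be exhausted by direct inspection'' is an expectation, not a step. Second, the rigidity/Thurston step is the entire content of the proof and is left as ``I expect'': translating ``the cut-partition contains no truncated triangle'' into a constraint on the Eisenstein-integer positions of the twelve cone points is nontrivial, because the cut-partition is defined by a somewhat ad hoc local surgery on $T_n^6$, not by an intrinsic property of the flat metric. Third, note an edge case the conjecture itself glosses over: Definition~\ref{def:triangle} admits $t=0$ ($0$-triangles are single vertices), so any fullerene all of whose hexagons are pairwise non-adjacent (e.g.\ the unique $C_{24}$-isomer, whose $T_{24}^6$ is two isolated vertices) has a cut-partition consisting of $t$-triangles only; Conjecture~\ref{conjecture:cut partitions no gSW} explicitly carves out the all-$0$-triangle case, and any proof of Conjecture~\ref{conjecture:t-triangles_only} must either do the same or treat these as additional examples. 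In short: the skeleton is plausible, but the finiteness of the case analysis and the lattice-rigidity step are precisely the open content of the problem and remain unproved in your write-up.
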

    \noindent More importantly, we propose the following
\begin{conjecture}\label{conjecture:cut partitions no gSW}
    Let $T_n^6$, for a feasible $n$, be the hexagonal subgraph of $T_n$. 
    The cut-partition of $T_n^6$ consists solely of elements that are either $t$-triangles or $(t,(r_1,r_2,r_3))$-triangles if and only if $T_n$ contains no gSW path -- with the exception that the cut-partition contains only $0$-triangles.
\end{conjecture}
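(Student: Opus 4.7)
The strategy is to establish the equivalence via a two-coloring parity argument on zigzag paths in $T_n^6$. Restricting throughout to gSW paths whose intermediate vertices $v_3,\ldots,v_{2w-2}$ lie in $V(T_n^6)$, as emphasized in the discussion preceding the conjecture, such a gSW path corresponds to a zigzag $v_2,v_3,\ldots,v_{2w-1}$ in $T_n^6$ of odd length $2w-3\geq 1$ whose end edges $v_2v_3$ and $v_{2w-2}v_{2w-1}$ each border a non-triangular face of $T_n^6$ on one side (to accommodate the pentagonal caps $v_1$ and $v_{2w}$ in the adjacent triangles of $T_n$). The central device is to color each triangular face of a cut-partition component by its orientation, ``up'' or ``down'', in the local triangular lattice structure; a zigzag alternates triangle orientations along its triangles, so the parity of its length is governed by whether its first and last triangles have equal or opposite orientation.

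For the forward direction, assume the cut-partition consists only of $t$-triangles or $(t,(r_1,r_2,r_3))$-triangles. A direct inspection of the three sides of a $t$-triangle shows that every side-edge is contained in an up-oriented triangle. In a truncated $t$-triangle the open edges sit in down-oriented triangles, but I would argue they never serve as valid endpoint edges: by Construction \ref{con:cutedge-partition} an open edge arises from a cut at a $2$-facet or $3$-facet vertex, so its ``other side'' in the original $T_n^6$ is glued to another cut-partition component rather than to a pentagonal region of $T_n$. Consequently every valid endpoint edge lies in an up-oriented triangle, so any zigzag between two such edges traverses an odd number of triangles and therefore has even length. No odd-length zigzag of the required form exists, so $T_n$ admits no gSW path.

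For the converse, I would argue by contrapositive: if some component $G$ of the cut-partition is neither a $t$-triangle nor a truncation thereof, then, since (by the construction) $G$ is a triangulated disk with no $2$-facet vertex, no $3$-facet vertex, and no degree-$5$ vertex, $G$ must contain a valid endpoint edge sitting in a down-oriented triangle. Following the zigzag initiated at this edge through the lattice until it terminates at another boundary edge of $T_n^6$ yields a zigzag between valid endpoint edges whose bounding triangles have opposite orientations, hence of odd length. Lifting to $T_n$ produces a gSW path, contradicting the no-gSW-path hypothesis.

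The main obstacles are twofold. First, in the forward direction, justifying that open edges of truncated $t$-triangles never serve as valid endpoint edges requires a delicate tracking of how the cut operation interacts with the planar embedding of $T_n^6$. Second, the converse demands a classification of the possible shapes of cut-partition components that deviate from the $t$-triangle form, together with an argument that each such shape must admit a down-oriented valid endpoint edge; this step plausibly requires an induction on the size or on the boundary structure of $G$. Separately, the exceptional case (cut-partition consisting solely of $0$-triangles, relevant to the smallest fullerenes such as $C_{20}$ and $C_{24}$) and the extension to gSW paths with pentagonal intermediate vertices would need to be handled, but both should reduce to the main parity framework.
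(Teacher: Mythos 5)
First, a point of calibration: the paper does not prove this statement. It is stated as Conjecture~\ref{conjecture:cut partitions no gSW}, supported only by numerical evidence for $n\le 100$, and ``Prove or falsify Conjectures \ref{conjecture:t-triangles_only} and \ref{conjecture:cut partitions no gSW}'' is explicitly listed as one of the paper's open problems. So there is no proof in the paper to compare yours against; what you have written is an attack on an open conjecture, and it should be judged on its own terms. The closest thing in the paper is the proof of Theorem~\ref{theorem:gSW}, which uses a parity-of-zigzag-length argument in the same spirit as yours, but only for the specific family of Construction~\ref{ex:construction}, not for general cut-partitions.

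On its own terms, your sketch has genuine gaps that go beyond the ones you flag. (i) You restrict to gSW paths whose intermediate vertices all lie in $V(T_n^6)$, but Definition~\ref{def:gSW} places no such restriction: the intermediate vertices $v_3,\ldots,v_{2w-2}$ may have degree $5$, and the proof of Theorem~\ref{theorem:gSW} spends most of its effort (Cases 2, 2.1, 2.2) on exactly these excursions through $T_n^5$; there the argument leans on the specific fact that $T_n^5$ consists of isolated $1$-triangles, which is false for general non-IPR fullerenes where pentagonal vertices can be adjacent in many configurations. Reducing this to your ``main parity framework'' is not routine. (ii) Your orientation-parity argument is carried out inside a single cut-partition component, but a zigzag path lives in $T_n^6$ (indeed in $T_n$), not in the cut-partition: a $2$-facet vertex of degree four lies in exactly two triangles of $T_n^6$, so a zigzag can pass through it and cross from one component to another, and the local ``up/down'' colorings of distinct components need not be consistent under the gluing. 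Without controlling how parity behaves across cuts, the even-length conclusion for the forward direction does not follow. (iii) The converse direction is the substantive half of the conjecture and is essentially asserted: ``$G$ must contain a valid endpoint edge sitting in a down-oriented triangle'' presupposes the classification of non-triangle components that you defer, and it is precisely this classification that would constitute a proof. As it stands, the proposal is a plausible strategy outline rather than a proof, and the conjecture should still be regarded as open.
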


    \noindent These two conjectures now lead naturally to
\begin{openproblem}
    Prove or falsify Conjectures \ref{conjecture:t-triangles_only} and \ref{conjecture:cut partitions no gSW}.
\end{openproblem}

\noindent We now present an algorithm for constructing infinitely many fullerenes that do not contain a gSW path. 

\begin{construction}\label{ex:construction}
     For an integer $t\geq 2$, consider four $(2t,(t-1,t-1,t-1))$-triangles, glued together along their open edges to form a connected graph, which is the hexagonal subgraph $T_n^6$ of the final dual fullerene.
     Then, $T_n^6$ has four hexagonal and $t^2+6t-3$ triangular facets, and a total of $2(t^2+6t+2)$ vertices. 
     The degrees of the boundary vertices of the four hexagonal facets alternate between 4 and 5. 
     By inserting a $1$-triangle consisting of pentagonal vertices into each of these hexagonal facets, the structure ultimately forms a dual fullerene in $C_n$ with $n=4(t^2+6t+7)$.
\end{construction}
\begin{figure}[H]
    \begin{center}
    \includegraphics[scale=1.1]{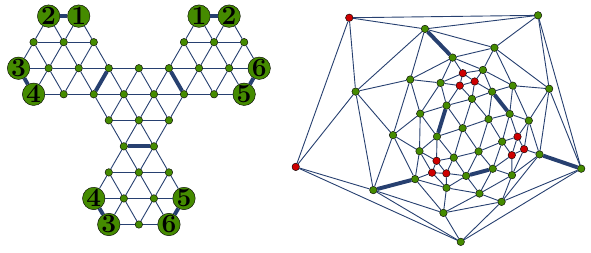}
    \end{center}
    \caption{Hexagonal graph $T_{92,39303}^6$ (left) and dual graph     
            $T_{92,39303}$ (right), where four $(4,(1,1,1))$-triangles have been glued together along the thick edges. 
            We identify vertices labeled with the same number.}\label{fig:t2_net}
\end{figure}
The key idea behind the proof of the following result is the observation that no dual fullerene constructed according to Construction \ref{ex:construction} allows for the existence of a gSW path.
\begin{theorem}\label{theorem:gSW}
    There exist infinitely many fullerenes on which a gSW operation cannot be applied. 
\end{theorem}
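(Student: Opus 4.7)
The plan is to exhibit, for every integer $t\geq 2$, the fullerene $T_n$ produced by Construction \ref{ex:construction} with $n=4(t^2+6t+7)$, and to show that $T_n$ admits no gSW path. Since $n$ is a strictly increasing function of $t$, the fullerenes of this family are pairwise non-isomorphic; because the existence of a gSW path is necessary for applying a gSW operation by Definition \ref{def:gSW}, the theorem then follows as soon as each individual $T_n$ has been handled.

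Fix $t\geq 2$. Four structural features of $T_n$ will be used repeatedly, each flowing directly from Construction \ref{ex:construction}: (i) the twelve pentagonal vertices split into four pairwise disjoint pentagonal $1$-triangles, one placed in the interior of each hexagonal facet of $T_n^6$; (ii) every pentagonal vertex has exactly five neighbours in $T_n$, namely the other two pentagonal vertices of its own $1$-triangle together with three hexagonal boundary vertices of its enclosing hexagonal facet of $T_n^6$; (iii) the hexagonal vertices adjacent to some pentagonal vertex are precisely the boundary vertices of the four hexagonal facets of $T_n^6$; (iv) the interior of each $(2t,(t-1,t-1,t-1))$-piece of $T_n^6$ is a triangulated patch of the planar triangular lattice, so that along any zigzag inside it every three consecutive vertices form a unit triangle and the zigzag advances parallel to a fixed lattice direction. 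Suppose, for contradiction, that $(v_1,\ldots,v_{2w})$ is a gSW path in $T_n$; by (iii), the degree and adjacency requirements on $v_1,v_2,v_{2w-1},v_{2w}$ from Definition \ref{def:gSW} force $v_2$ and $v_{2w-1}$ to be boundary vertices of (possibly distinct) hexagonal facets of $T_n^6$.

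I would then distinguish two cases according to whether the intermediate vertices $v_3,\ldots,v_{2w-2}$ all lie in $V(T_n^6)$. In the all-hexagonal case, the middle segment of the path is a zigzag of odd length $2w-3$ in $T_n^6$ connecting two boundary vertices of hexagonal facets of $T_n^6$. Fixing lattice coordinates on each $(2t,(t-1,t-1,t-1))$-piece and describing the passage of a zigzag across a glued open edge by an explicit reflection rule, I would verify that the combinatorial distance along any zigzag between any two boundary vertices of hexagonal facets of $T_n^6$ is always even, contradicting the oddness of $2w-3$. In the remaining case, some $v_k$ with $3\leq k\leq 2w-2$ is pentagonal and therefore belongs to a pentagonal $1$-triangle by (i); using the triangulation condition $(v_{k-1},v_{k+1})\in E(T_n)$ together with (ii), only a handful of local continuations of the zigzag through $v_k$ are possible, and each either forces the path to re-enter $T_n^6$ along a trajectory already excluded by the first case or violates the edge condition at the next step.

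The main obstacle I anticipate is the parity statement in the all-hexagonal case: making it uniform in $t$ requires a careful choice of lattice coordinates on the four glued $(2t,(t-1,t-1,t-1))$-pieces, together with a crossing rule for the glued open edges that respects the lattice orientation across the gluing so that zigzag length parity is preserved. Once this coordinate setup is in place, the parity check reduces to a short lattice computation. The pentagonal-detour case is, by contrast, a finite local verification around the twelve pentagonal vertices and should be essentially routine once their neighbourhoods in $T_n$ are drawn out.
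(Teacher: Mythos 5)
Your proposal follows essentially the same route as the paper: the same family from Construction \ref{ex:construction}, the same split into the all-hexagonal case (where the parity of zigzag lengths between boundary vertices of hexagonal facets of $T_n^6$ gives the contradiction) and the case of a pentagonal intermediate vertex (handled by local analysis around the four pentagonal $1$-triangles). If anything, your plan to prove the parity claim via explicit lattice coordinates and a gluing rule is more detailed than the paper, which only verifies that claim by inspection for $t=2$.
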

\begin{proof}
    Let $t\in\N$, $t\geq 2$. 
    For $n:=4(t^2+6t+7)$, let $T_n$ be obtained via Construction \ref{ex:construction}. 
    Denote by $T_n^5$ and $T_n^6$ the pentagonal and hexagonal subgraphs of $T_n$, respectively. 
    Here, we distinguish vertices in $T_n$ by their degree: vertices of degree five are labeled $p$, those with degree six are labeled $h$, and vertices whose degree is unspecified are labeled $v$.
    
    Now assume, for $w\in\N$, $w\geq 2$, that the dual fullerene graph $T_n$ contains a gSW path
    \begin{align*}
    \bm{p}:=(p_1,h_2,v_3,\ldots, v_{2w-2},h_{2w-1},p_{2w}),
    \end{align*}
    which is a zigzag path of odd length $2w-1$, a requirement for a gSW operation. 
    Since $p_1,p_{2w}\in V(T_n^5)$ and $h_2,h_{2w-1}\in V(T_n^6)$, the two vertices $h_2$ and $h_{2w-1}$ must be boundary vertices of two hexagonal facets of $T_n^6$.
    We now consider two cases, which are illustrated in Figure \ref{fig:gsW_proof}: 

    \textbf{1. Case:} 
    All the intermediate vertices $v_3,\ldots,v_{2w-2}$ of $\bm{p}$ are vertices of $T_n^6$, i.e., $v_j\in V(T_n^6)$ for every $j=3,\ldots,2w-2$.
    Then the truncated zigzag path $(h_2,\ldots,h_{2w-1})$ lies entirely in $T_n^6$, and its length $2w-3$ is odd. 
    However, by construction, any zigzag path starting and ending at boundary vertices of two distinct hexagonal facets in $T_n^6$ has even length. 
    This can be verified for $t=2$ in Figure \ref{fig:t2_net}.
    Hence, the gSW path cannot exist in this case, and no gSW operation can be applied. 
    \begin{figure}
        \centering
        \includegraphics[width=0.5\linewidth]{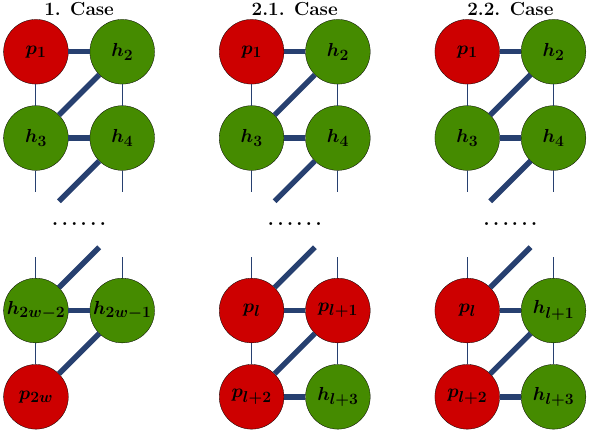}
        \caption{A zigzag path starting with a vertex of degree $5$ followed by a vertex of degree $6$ in $T_n$ (resulting from Construction \ref{ex:construction}) can never be a gSW path.}
        \label{fig:gsW_proof}
    \end{figure}
    
    \textbf{2. Case:} 
    There exists a minimal index $l \in \{3, 2w-2\}$ such that $v_l \in V(T_n^5)$, meaning that vertex $v_l$ has degree 5 in $T_n$.
    In this case we label it $p_l$ instead of $v_l$.

    Since the segment $(h_2,h_3,\ldots,h_{l-1})$ lies entirely on $T_n^6$, it has even length $l-3$, so $l$ must be odd.
    Given that $T_n^5$ consists of four connected components, each forming a $1$-triangle (triangular facet), we now consider the following two subcases depending on the degree of the next vertex in the path: 
    
    \textbf{2.1. Case:} 
    If the next vertex $p_{l+1}$ has also degree $5$, then $v_{l+2}$ must also have degree $5$, as the vertices in $T_n^5$ are arranged in $1$-triangles, and $T_n$ has solely triangular facets. 
    The zigzag path $(p_1,h_2,\ldots,p_l,p_{l+1},p_{l+2})$ is then of even length $l+1$.
    The remaining part of the path $(p_{l+2},h_{l+3},\ldots,,h_{2w-1},p_{2w})$ is also of even length $2w-l+5$, contradicting the requirement that a  gSW path must have odd length.
    Hence, $\bm{p}$ cannot be a gSW path. 
    If additional intermediate vertices $p_{l_1},\ldots,p_{l_u}$ of $\bm{p}$ belong to $V(T_n^5)$, the same reasoning can be applied inductively, since the segment $(p_{l_j+2},h_{l_j+3},\ldots,p_{l_{j+1}})$ has the same structure as $(p_1,h_2,\ldots,p_{l_1})$.

    \textbf{2.2. Case:} If the next vertex $h_{l+1} \in V(T_n^6)$, then $v_{l+2}$ must belong to $T_n^5$, and due to the structure of $T_n^5$, the next vertex must have degree $6$. 
    The segment of $\bm{p}$ starting from $p_{l+2}$ has the same structure as $(p_1,h_2,\ldots,p_{l_1})$ from the previous subcase, leading to the same contradiction.

    In all cases, we deduce that no gSW path can exist in $T_n$.
    Therefore, the gSW operation cannot be applied, proving the theorem.

\end{proof} 

Finally, let us discuss a simple construction of seed fullerenes in $C_n$ with $n\geq 36$ which may be used as a starting point of some isomerization algorithm.
Upon analyzing our data, we observed that for every $36\leq n\leq 150$, there exists an isomer whose pentagonal arrangement resembles the structure shown in Figure \ref{fig:C_36}. 
Indeed, it can be proved inductively that such a fullerene exists for any $n\geq 36$. 
The construction outlined below forms the main idea for proving this, and it also suggests how an efficient algorithm for generating a seed fullerene for a given $n\geq 36$ might be structured.

\begin{construction}\label{construction:seed fullerene}
Starting with $C_{36,1}$, remove all dotted edges shown on the left in Figure \ref{fig:C_36}. 
Label each boundary vertex $v$ of the larger connected component (the one with more vertices) with $6-d(v)$, where $d(v)$ is the degree of vertex $v$. 
This results in the lexicographically smallest label vector $(1,2,2,2,2,3)$ for the six consecutive boundary vertices. 
Next, by adding a new vertex and connecting it to the vertex labeled 1 and its adjacent vertices labeled $2$ and $3$, a connected component is obtained with an additional vertex while preserving the same label vector as shown on the right in Figure \ref{fig:C_36}.
Afterwards, the dotted edges can be reintroduced, resulting in a $C_{38}$ fullerene.

This procedure can be iterated, as the label vector and its length remain unchanged throughout. 
This implies that the modified, larger connected component can be reconnected with the other, smaller connected component. 
By repeating the process $\sfrac{n}{2}-18$ times, a $C_n$-isomer for any feasible $n\geq 36$ can be efficiently constructed. 
\end{construction}

 \begin{figure}[H]
\begin{center}
\includegraphics[scale=1.1]{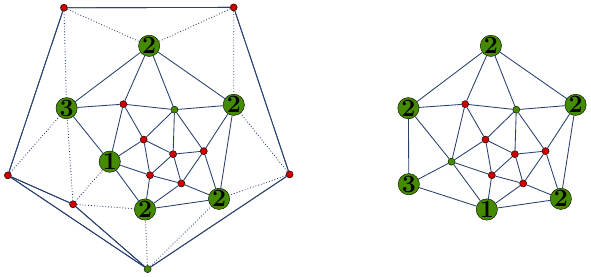}
\end{center}
\caption{Left: Dual fullerene graph $T_{36,1}$. Isomer $C_{36,1}$ can serve as an initial structure 
        for generating seed fullerenes of arbitrary size for any isomerization algorithm. Right: The inner connected component obtained by applying Construction \ref{construction:seed fullerene} to $C_{36,1}$ once with the unchanged boundary degree sequence preserved. 
        Replacing the corresponding component in $C_{36,1}$ with this component yields a dual fullerene graph in $C_{38}$.}\label{fig:C_36}
\end{figure}

\section{Spectra of fullerenes}\label{section:spectra_of_fullerenes}

Next, we aim to describe the shape of a fullerene using the spectra of its adjacency and degree matrices.
The spectral analysis of graphs based on various matrix representations has a long-standing tradition, as seen in \cite{BroHae,BruCve,Bap}.

\subsection{Characters as unique fullerene descriptors}

In \cite{Bille20}, the authors uncovered a notable connection between the (chemical) relative energy of fullerene molecules $C_{60}$ and \textit{Newton polynomials of order} $k\in\N_0$, which are defined as

\begin{align*}
    N(M,k):= \tr(M^k) = \sum_{j=1}^n \lambda_j^k,
\end{align*}
where $\lambda_j$, $j=1,\ldots,n$, are the eigenvalues of a matrix $M$ representing a dual fullerene graph $T_n$. 

Common choices for $M$ include the adjacency matrix $A$, the degree matrix $D$, or a linear combination of those $\alpha A+\beta D$ with $\alpha,\beta\in\R$.
Alternatively, instead of the full dual fullerene graph $T_n$, one may focus on subgraphs such as $T_n^6$ or $T_n^5$, and their associated matrices.
For instance, the study in \cite{Bille20} focuses on the adjacency matrix of the hexagonal subgraph $T_n^6$, where the authors observed a Pearson correlation coefficient between Newton polynomials $N(M,k)$ and the relative energy of the molecules often exceeding $0.9$, depending on the order $k$.
This high correlation, along with the energy-based ordering of fullerenes outlined in \cite{Grimme17}, allows for the identification of energetically stable isomers at a reduced computational cost.
The authors chose hexagonal subgraphs partly because they conjectured that the positions and arrangements of the remaining twelve pentagons could be inferred from larger-than-triangle facets in $T_n^6$ and the degree sequences of their boundary vertices.

Notably, when $M=A$, the Newton polynomials $N(A,k)$ represent the number of closed paths of length $k$ on the graph with adjacency matrix $A$.
This interpretation reveals that Newton polynomials of varying orders $k$ encode specific structural information about the fullerene, such as the number of edges when $k=2$.
This suggests the potential to combine Newton polynomials across all orders into a comprehensive graph descriptor that uniquely characterizes the fullerene.

\begin{definition}
    For a feasible $n$, let $T_n$ be a dual fullerene graph, and $A$ and $D$ its adjacency and degree matrices. 
    For $\alpha, \beta \in\R$, we call 
    \begin{align*}
    ch_{\alpha,\beta}(T_n):=\tr\left( \exp\left( \alpha A + \beta D \right)\right)
    \end{align*}
    the $(\alpha,\beta)$--\textit{character} of $T_n$, where the matrix exponential is defined as
    \begin{align*}
    \exp(M):= \sum_{k=0}^\infty \frac{1}{k!}M^k
    \end{align*}
    for every complex-valued quadratic matrix $M$. 
\end{definition}
Note that the $(\alpha,\beta)$-character can be rewritten as
\begin{align*}
ch_{\alpha,\beta}(T_n)=\sum_{k=0}^\infty \frac{\alpha^k}{k!}\tr\left( \left(A+\frac{\beta}{\alpha}D \right)^k \right)=\sum_{j=1}^m e^{\alpha \lambda_j},
\end{align*}
where $\lambda_1,\ldots,\lambda_m$ with $m=\sfrac{n}{2}+2$, are the eigenvalues of $A+\sfrac{\beta}{\alpha}D$.
This representation reveals that the character is essentially an infinite series of Newton polynomials of the adjacency matrix $A+\sfrac{\beta}{\alpha}D$.  
Analyzing the eigenvalues of $A+\sfrac{\beta}{\alpha}D$ becomes sufficient to compute the $(\alpha,\beta)$-character.

The behaviour of the $(\alpha,\beta)$-character varies considerably with different choices of parameters $\alpha$ and $\beta$, raising the important question of identifying an appropriate range of parameters. 
Extensive numerical experiments for various combinations of $\alpha,\beta$ have led us to the following statement.

\begin{conjecture}\label{con:unique_character}
For $\alpha ,\beta\in (0,1)$ and feasible $n_1,n_2$, let $T_{n_1},T_{n_2}$ be two dual fullerenes. 
It holds that
\begin{align*}
T_{n_1}\simeq T_{n_2} 
\Leftrightarrow 
ch_{\alpha,\beta}\left(T_{n_1}\right)
=
ch_{\alpha,\beta}\left(T_{n_2}\right).
\end{align*}
\end{conjecture}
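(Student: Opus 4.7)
The plan is to prove the two implications separately. The forward direction ($\Rightarrow$) follows from the conjugation invariance of the matrix exponential: an isomorphism $T_{n_1}\simeq T_{n_2}$ yields a permutation matrix $P$ with $A_{n_2}=PA_{n_1}P^T$ and $D_{n_2}=PD_{n_1}P^T$ (since relabeling vertices permutes the degree sequence along the diagonal), so $\exp(\alpha A_{n_2}+\beta D_{n_2})=P\exp(\alpha A_{n_1}+\beta D_{n_1})P^T$ and tracing gives $ch_{\alpha,\beta}(T_{n_1})=ch_{\alpha,\beta}(T_{n_2})$.

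For the converse, I would exploit analyticity. The map $(\alpha,\beta)\mapsto ch_{\alpha,\beta}(T)$ is an entire function on $\C^2$; if it coincides with its analogue for $T_{n_2}$ on the open set $(0,1)^2$, then by analytic continuation the two entire functions agree identically and their Taylor coefficients match. Since
\begin{equation*}
ch_{\alpha,\beta}(T)=\sum_{k=0}^{\infty}\frac{1}{k!}\,\tr\bigl((\alpha A+\beta D)^{k}\bigr),
\end{equation*}
this yields the polynomial identities $\tr\bigl((\alpha A_{n_1}+\beta D_{n_1})^{k}\bigr)=\tr\bigl((\alpha A_{n_2}+\beta D_{n_2})^{k}\bigr)$ for every $k\in\N_0$. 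Setting $k=0$ forces $n_1=n_2=:n$. Comparing coefficients of the monomials $\alpha^{i}\beta^{k-i}$ and substituting $D=6I-P$, where $P$ is the diagonal indicator of the twelve pentagonal vertices of $T_n$, reduces the information to the full family of traces $\tr(A^{k_0}P A^{k_1}P\cdots P A^{k_s})$, i.e., the counts of closed walks on $T_n$ of prescribed length with prescribed visits to pentagonal vertices.

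The remaining combinatorial task is to show that this pentagon-decorated walk spectrum identifies a fullerene up to isomorphism. I would attempt to extract, for increasing $r$, the isomorphism type of the $r$-neighborhood of each pentagonal vertex from the low-order traces, and then glue the local data into a global embedding using the fact that $T_n$ is a $3$-connected planar triangulation with at most twelve exceptional vertices of degree five, invoking Whitney's theorem (as cited in the preliminaries) to rule out distinct planar embeddings. The main obstacle is precisely this final reconstruction step: cospectrality with respect to $A$ alone is famously insufficient for graph identification via Schwenk-type constructions, so the argument must genuinely exploit the additional degree weighting and the rigid combinatorial structure of fullerenes; moreover, the strongest reading of the conjecture demands that the difference $ch_{\alpha,\beta}(T_{n_1})-ch_{\alpha,\beta}(T_{n_2})$, an entire non-zero function on $\C^2$, has in fact \emph{no} zero in $(0,1)^2$, a much finer claim that would require a monotonicity or convexity estimate along the rays $\beta/\alpha=\mathrm{const}$ using the representation $ch_{\alpha,\beta}(T)=\sum_j e^{\alpha\lambda_j(\beta/\alpha)}$. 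A prudent preliminary step is a computational scan of all isomers with $n\leq 400$ using the algorithms of \cite{bille_github_phd}, since any single coincidence of characters would immediately refute the conjecture.
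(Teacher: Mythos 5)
The statement you are trying to prove is not a theorem in the paper: it is Conjecture \ref{con:unique_character}, explicitly listed as an open problem (``Prove or falsify Conjecture \ref{con:unique_character}''), supported only by numerical experiments. So there is no reference proof to compare against, and your proposal should be judged as a research program. The forward implication is indeed trivial and your conjugation argument for it is correct. The converse, however, contains two genuine gaps. First, your analytic-continuation step needs the equality $ch_{\alpha,\beta}(T_{n_1})=ch_{\alpha,\beta}(T_{n_2})$ to hold on an open subset of the $(\alpha,\beta)$-plane before you can equate Taylor coefficients and extract the identities $\tr\bigl((\alpha A_{n_1}+\beta D_{n_1})^{k}\bigr)=\tr\bigl((\alpha A_{n_2}+\beta D_{n_2})^{k}\bigr)$ for all $k$. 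The conjecture as stated quantifies over a \emph{single fixed} pair $(\alpha,\beta)\in(0,1)^2$: equality of two real numbers for one parameter value gives you one scalar equation, not a family of trace identities. You notice this yourself at the end (``no zero in $(0,1)^2$''), but you do not supply the monotonicity or separation estimate that would be needed, and the paper's own Figure \ref{fig:linear_ordering} shows that the spread of character values is delicate even across different $n$, so this is not a formality.

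Second, and more fundamentally, even if you grant yourself all the trace identities, the step ``show that the pentagon-decorated walk spectrum identifies a fullerene up to isomorphism'' is precisely the entire content of the conjecture, and your proposal offers only an intention (``I would attempt to extract \ldots and then glue'') rather than an argument. Whitney's theorem, as used in the preliminaries, guarantees uniqueness of the planar embedding of a given $3$-connected graph; it does not help you pass from spectral/walk data to the graph itself. Since non-isomorphic cospectral graphs are plentiful (as you note via Schwenk), and since it is not known whether the pair $(A,D)$ — equivalently the traces $\tr(A^{k_0}PA^{k_1}P\cdots PA^{k_s})$ — determines a triangulation of the sphere with twelve degree-five vertices, this reconstruction step is a missing idea, not a routine verification. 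Your suggested computational scan over $n\le 400$ is sensible as a falsification attempt, but a finite scan cannot establish the conjecture. In short: the proposal correctly disposes of the easy direction and correctly locates the difficulty, but it does not close either gap in the hard direction, which remains open exactly as the paper states.
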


Therefore, the following is presented for future research:
\begin{openproblem}
    Prove or falsify Conjecture \ref{con:unique_character}.
\end{openproblem}

As the number of vertices in a fullerene tends to infinity, the fullerene structure becomes composed solely of hexagonal facets. 
Two notable configurations of infinitely many hexagons are the \textit{hexagonal lattice} (also referred to as \textit{graphene}) and \textit{infinite $(p,q)$--nanotubes}.
For precise definitions and further details on their eigenvalues, we refer to \cite{bille23} for the hexagonal lattice and \cite{bille24} for the infinite $(p,q)$--nanotubes.
In general, the dual of a 3-regular infinite graph $H$, composed only of hexagonal facets, is an infinite triangulation $T$.
If a loop of weight $3$ is added to every vertex of $T$, then the average number of closed paths of length $k$ per vertex, denoted by $\mu_k(\cdot)$, satisfies the following relation:
\begin{align}
    \mu_{2k}(H) = \mu_k(T),\quad k\in\N_0.
\end{align}

This observation connects to a similar result for finite fullerene graphs. 
As conjectured in \cite{bille23}, a sequence of dual fullerene graphs, represented by the matrix $A_n+\sfrac{1}{2}D_n$ can be constructed such that their normalized Newton polynomials of degree $k$ converge to $\mu_k(T)$ as $n\to \infty$:
\begin{align*}
    \frac{2}{n+4}N\left(A_n + \frac{1}{2}D_n,k \right)\xrightarrow[n\to \infty]{} \mu_k(T),\quad k\in\N.
\end{align*}
For their duals, i.e., the direct fullerene graphs represented by $A_n$, it follows that
\begin{align*}
    \frac{1}{n} N\left(A_n,k\right) \xrightarrow[n\to\infty]{} \mu_k(H),\quad k\in\N.
\end{align*}

These observations, along with Conjecture \ref{con:unique_character}, suggest the parameter choice $\alpha=2\beta=\sfrac{1}{2}$ when analyzing dual fullerene graphs, and $\alpha=1, \beta=0$ for direct fullerene graphs.
Henceforth, whenever the parameters $(\alpha,\beta)$ are omitted, we refer to the $\left( \sfrac{1}{2},\sfrac{1}{4}\right)$--character.
In addition to the unique characterization of fullerenes in Conjecture \ref{con:unique_character}, the characters seem to induce an ordering within the set of all fullerenes that is reflexive, transitive, antisymmetric, and total.  

\subsection{Linear ordering of fullerenes and their limiting shape}
\label{sec:linear_ordering_of_fullerenes}

Provided that  Conjecture \ref{con:unique_character} holds true, an enumeration method based on characters is injective, which is a necessary condition for linearity.
A second desirable property for enumeration is the monotonicity with respect to the number of vertices, meaning that

\begin{align}
ch_{\alpha,\beta}(T_{n_1})<ch_{\alpha,\beta}(T_{n_2}) \Leftrightarrow n_1 < n_2. \label{condition_monotonicity}
\end{align}
When both $\alpha$ and $\beta$ tend to 0, the $(\alpha,\beta)$--character converges to the number of vertices in the corresponding dual graph:

\begin{align*}
\lim_{\alpha\rightarrow 0}\lim_{\beta\rightarrow 0} ch_{\alpha,\beta}(T_n)=
\lim_{\beta\rightarrow 0}\lim_{\alpha\rightarrow 0} ch_{\alpha,\beta}(T_n)=\frac{n}{2}+2.
\end{align*} 
Thus, condition (\ref{condition_monotonicity}) is satisfied for sufficiently small $\alpha,\beta$. 
However, if $\alpha$ and $\beta$ are too small, the numerical precision required to distinguish between different fullerenes in $C_n$ increases, leading to higher computational costs, as the characters must be computed with greater accuracy.   

On the other hand, the character grows exponentially with $\alpha$ and $\beta$, which can result in the smallest character in $C_n$ becoming  smaller than the largest character in $C_{n-2}$. 
In other words, for a feasible $n$, the range of characters $ch_{\alpha,\beta}$ in $C_n$ expands with increasing $\alpha$ and $\beta$. 
Therefore, the parameters must be chosen small enough to prevent overlapping in the character ranges for different $n$, ensuring that condition (\ref{condition_monotonicity}) is maintained. 

These considerations highlight the importance of selecting appropriate values for $\alpha$ and $\beta$ to balance computational efficiency while ensuring that condition (\ref{condition_monotonicity}) is fulfilled. 

In Figure \ref{fig:linear_ordering}, histograms of the $(\alpha,\beta)$--characters are shown for four combinations of $(\alpha,\beta)$ and $n\in\lbrace 58,60,62 \rbrace$.
The vertical dotted lines mark the minimal and maximal character for each $C_n$, with blue, red, and green lines representing $C_{58}$, $C_{60}$, and $C_{62}$, respectively. 
In the upper-left plot with $\alpha=\beta=1$, the character ranges overlap, with the the largest character in $C_{58}$ not only surpassing all characters in $C_{58}$ by far but also the smallest character of $C_{60}$. 
This overlap is significantly reduced for $(\alpha,\beta)=(1,\sfrac{1}{2})$, as seen in the upper-right plot, though a small intersection remains between $C_{60}$ and $C_{62}$, caused by the appearance of the $(5,0)$--nanotube in $C_{60}$, which has an disproportionately large character.   
For $(\alpha,\beta)=(\sfrac{1}{2},1)$, the ranges no longer overlap, but the character values grow significantly, even for small $n$, as indicated by the ticks on the $x$-axis. 
Thus, computing characters and enumerating fullerenes with these parameter values would demand significant computational power as $n$ increases. 
The lower-right plot shows more distinct character ranges for the chosen values $\alpha=2\beta=\sfrac{1}{2}$ with a more practical character scale, supporting the choice of parameters in this study.

Additionally, extrem fullerene structures were observed to have the largest and smallest characters. 
In particular, $(p,q)$-nanontubes typically exhibit higher characters, where smaller circumferences $p+q$ correspond to larger characters.
This relationship arises from the fact that nanotubes with smaller circumferences permit a greater number of closed paths of a given length resulting in a higher character. 
For example, the $(5,0)$--nanotube provides an upper bound for the character within $C_n$. 
As detailed in \cite{bille24} and as discussed in Section \ref{section:generating_algorithms_for_fullerenes}, a $(5,0)$--nanotube exists for every $n = 20 + 10r$ with $r\in\N_0$.

In contrast, \textit{Goldberg polyhedra}, i.e., fullerenes with icosahedral symmetry (highest symmetry a fullerene can attain), tend to have the smallest characters. 
A Goldberg polyhedron exists in $C_n$ if two integers $p,q\in\N_0$ satisfy $n=20\left((p+q)^2 -pq\right)$, cf. \cite{Fan17}, making the Goldberg polyhedron a nanotube with chiral vector $(5p,5q)$, cf. \cite{bille24}, corresponding to the largest feasible circumference in $C_n$.

\begin{figure}
    \centering
    \includegraphics[width=1\linewidth]{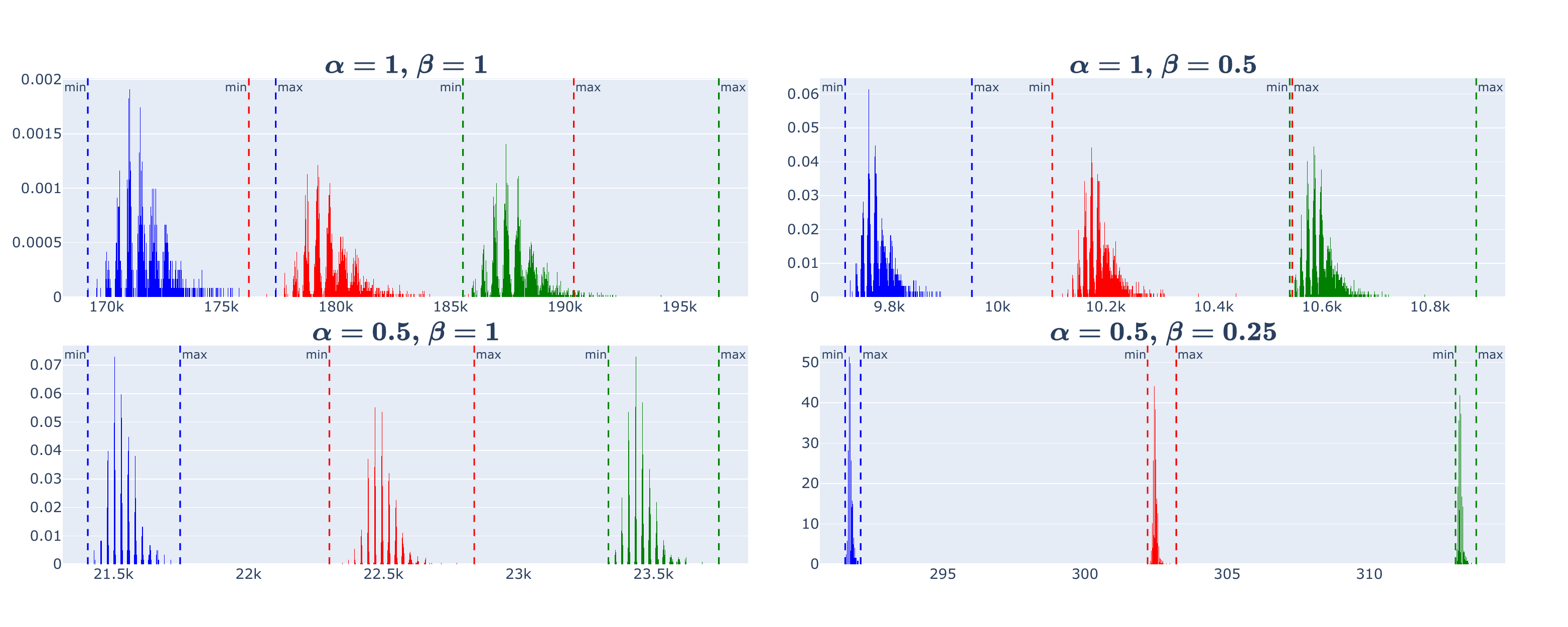}
    \caption{Ranges of $(\alpha,\beta)$--characters of $C_{58}$ (blue), $C_{60}$ (red), $C_{62}$ (green) with different parameters $(\alpha,\beta)$.}
    \label{fig:linear_ordering}
\end{figure}

In cases where a feasible $n$ allows integers $r,p,q\in\N_0$ to satisfy the equation $n=20+10r=20\left((p+q)^2 -pq\right)$, meaning that $C_n$ contains both a Goldberg polyhedron and a $(5,0)$--nanotube (structures with extreme characters), these two can be utilized to centralize and normalize the characters of all dual fullerenes within $C_n$. 

A fundamental question that arises here is how these centralized and normalized characters are distributed over the interval $[0,1]$. 
\begin{openproblem}
    Check the convergence of 
    \begin{align}
        \frac{ch_{\sfrac{1}{2},\sfrac{1}{4}}(T_n) - \min_{G}ch_{\sfrac{1}{2},\sfrac{1}{4}}(G)}{\max_{G}ch_{\sfrac{1}{2},\sfrac{1}{4}}(G) -\min_{G}ch_{\sfrac{1}{2},\sfrac{1}{4}}(G)}
        \label{formula:limit_centra_norma_character}
    \end{align}
    as $n\to \infty$, where $G$ represents a dual fullerene graph in $C_n$, and $T_n$ a random dual fullerene graph in $C_n$. 
    If the limit exists, find its explicit form.
\end{openproblem}

One possible starting point for addressing this problem is Figure \ref{fig:character_center_norm_T_T6_60_80_140}, which shows the empirical densities of characters adjusted according to \eqref{formula:limit_centra_norma_character} of all $C_n$-isomers for $n=60,80,140$. 
It appears that as $n$ grows, the normalized histograms tend to converge to a $\Gamma$-shaped limiting distribution density.

Understanding the asymptotic behavior of expression \eqref{formula:limit_centra_norma_character} could provide insights into generating large random fullerene isomers by sampling from the limiting distribution of their centralized and normalized characters. 
\color{black}

Since dual fullerenes consist solely of vertices of degree six in the limit $n\to \infty$, it may be advantageous to analyze the same distribution using the hexagonal subgraph $T_n^6$ instead of $T_n$. 

\begin{figure}[H]
    \centering
    \includegraphics[width=0.9\linewidth]{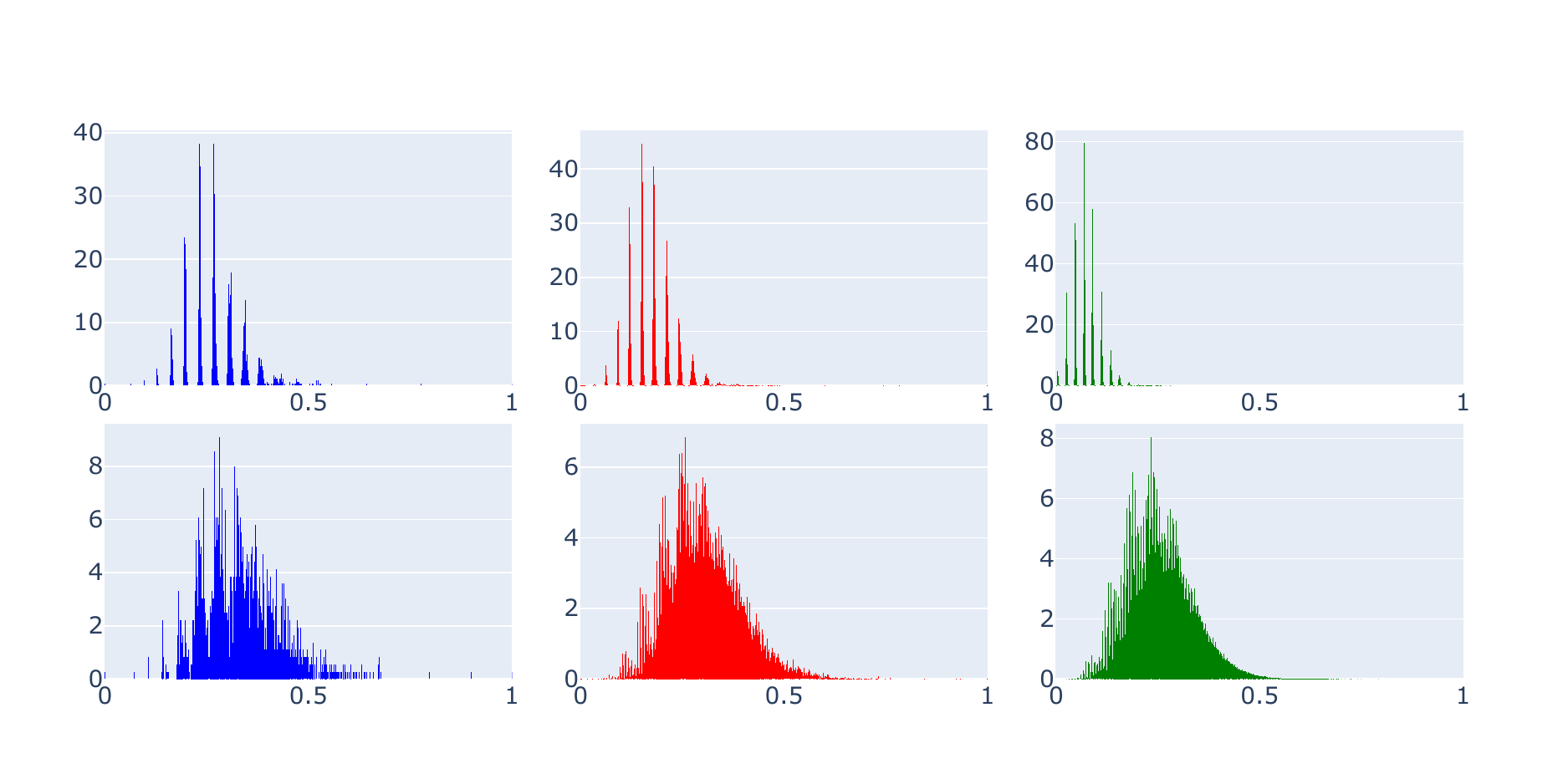}
    \caption{Normalized histograms (1000 bins) of characters of isomers $C_n$, centered and normalized as per \eqref{formula:limit_centra_norma_character}, for $n=60$ (blue), $n=80$ (red) and $n=140$ (green) using $T_n$ (upper row) and $T_n^6$ (lower row).}
    \label{fig:character_center_norm_T_T6_60_80_140}
\end{figure}

\section{Summary}\label{section:summary}
This paper explores several open mathematical problems related to fullerenes, with a primary focus on their generation, enumeration, and spectral properties. 
It articulates several key problem statements and conjectures, summarized at the end, which call for further investigation.

The paper begins by tackling the challenge of generating random fullerene isomers, offering an overview of current generating algorithms. 
While the face spiral conjecture implies an established method for enumerating fullerenes, it fails to yield a uniform distribution over $C_n$, leaving open the problem of how to achieve such uniformity.

Next, the discussion shifts to a generating algorithm based on the gSW operation, revealing an infinite family of fullerenes for which this operation is proven to be inapplicable. 
Numerical examples illustrate the structural limitations of  the gSW operation, particularly for fullerenes with fewer than 100 vertices.

Additionally, the introduction of a novel graph invariant for fullerenes, termed the \textit{character}, derived from  adjacency and degree matrices, marks another significant contribution.
This invariant conjectures a linear ordering of all fullerenes, offering an alternative framework to the face spiral method for enumeration and structural analysis of fullerenes.
Conjecture \ref{con:unique_character} and the investigation of the limiting behavior of the centralized and normalized characters, as given in expression \eqref{formula:limit_centra_norma_character}, pose fundamental questions regarding the uniqueness of the character and its applicability to fullerene ordering and generation.

To summarize, this paper outlines the following five open problems that might motivate future research:

\begin{enumerate}
    \item Develop a computationally efficient algorithm to generate a
        uniform distribution on fullerene isomers $C_n$ for all feasible $n$.
    \item Identify all fullerenes that do not permit a gSW 
        operation.
    \item Prove or falsify Conjectures \ref{conjecture:t-triangles_only} 
        and \ref{conjecture:cut partitions no gSW} regarding the relationship between cut-partitions and the gSW operation. 
    \item Prove or falsify Conjecture \ref{con:unique_character} about 
        the uniqueness of $(\alpha,\beta)$--characters.
    \item Determine, if existent, the distribution of centered and 
        normalized characters \eqref{formula:limit_centra_norma_character} of random fullerenes as $n\to \infty$. 
\end{enumerate}

\subsection*{Acknowledgement}
We express our gratitude to Annika Braig and Christian Spaete for their implementation of the algorithms and computation of the isomerization maps for the generalized Stone-Wales operation.

\newpage

\bibliography{Literature}{}
\bibliographystyle{abbrv}


\end{document}